\newcommand{\EEE}{\color{black}}
\numberwithin{equation}{section}
\newtheorem{theorem}{Theorem}
\newtheorem{lemma}{Lemma}
\newtheorem{proposition}{Proposition}
\newtheorem{remark}{Remark}
\numberwithin{theorem}{section}
\numberwithin{corollary}{section}
\numberwithin{lemma}{section}
\numberwithin{definition}{section}
\numberwithin{proposition}{section}
\numberwithin{remark}{section}
\newcommand{\rn}{\mathbb R^n}
\newcommand{\R}{\mathbb R}
\newcommand{\medint}{-\kern  -,375cm\int}
\newcommand{\h}{d\mathcal{H}^{n-1}}
\begin{document}

\title[\textsf{Robin negative parameter}]{A Sharp estimate for the first Robin-Laplacian eigenvalue with negative boundary parameter}
\author[ D. Bucur - V. Ferone -C. Nitsch  -  C. Trombetti]{  Dorin Bucur $^{*}$-Vincenzo ferone$^{**}$ - Carlo Nitsch$^{**}$  -  Cristina Trombetti$^{**}$}
\thanks{$^{*}$Univ. Grenoble Alpes, Univ. Savoie Mont Blanc, CNRS, LAMA 
73000 Chamb\'ery, France,
e-mail: dorin.bucur@univ-savoie.fr \\
$^{**}$ Dipartimento di Matematica e Applicazioni ``R. Caccioppoli'', Universit\`{a}
degli Studi di Napoli ``Federico II'', Complesso Monte S. Angelo, via Cintia
- 80126 Napoli, Italy; email: ferone@unina.it; c.nitsch@unina.it;
cristina@unina.it
}


\begin{abstract}
In this paper we prove that the ball maximizes the first eigenvalue of the Robin Laplacian operator with negative boundary parameter, among all convex sets of $\R^n$ with prescribed perimeter. The key of the proof is a dearrangement procedure of the first eigenfunction of the ball on the level sets of the distance function to the boundary of the convex set, which controls the boundary and the volume energies of the Rayleigh quotient.
\end{abstract}
\maketitle
\section{Introduction}

Let $\Omega$ be a bounded convex subset of $\rn$. 
In this paper we deal with the following  eigenvalue problem for the  Laplacian operator
\begin{equation}\label{P}
\left\{
\begin{array}{ll}
-\Delta u= \lambda(\alpha,\Omega)  u& \mbox{in $\Omega$}\\\\
\dfrac{\partial u}{\partial \nu} +\alpha \,u =0 & \mbox{on $\partial\Omega$.}
\end{array}
\right.
\end{equation}
where $\alpha    <   0$ is a negative parameter,  $\nu$  denotes the outer unit normal to $\partial \Omega$. 

 The fundamental eigenvalue of the Robin-Laplacian on $\Omega$ is defined by 
\begin{equation}
\label{ray}
\lambda(\alpha,\Omega)  = \mathop{\min_{v\in H^1(\Omega)}}_{ v\ne 0} \dfrac {\displaystyle \int_\Omega |\nabla v|^2 \, dx + \alpha \displaystyle\int_{\partial \Omega} v^2 \,  \h }{\displaystyle\int_\Omega  v^2 \, dx}= \mathop{\min_{v\in H^1(\Omega)}}_{ v\ne 0} F(v,\Omega).
\end{equation}

A minimizer $u$ in \eqref{ray} satisfies the equation \eqref{P}  in the weak form
$$\int_\Omega \nabla u \nabla v \,dx +\alpha \int_{\partial \Omega} uv  \h = \lambda(\alpha,\Omega)\int_\Omega uv \,dx, \quad \forall v \in H^1(\Omega). $$

The main objective of the paper is to prove that if $\Omega$ is a convex set and $B$ is a ball, both of them having boundaries of the same $(n-1)$ Hausdorff measure, then
$$\lambda(\alpha,\Omega)\le \lambda(\alpha,B),$$
with equality only if $\Omega$ is a ball. 

This question is related to the long lasting conjecture of Bareket from 1977 (see \cite{B2}) claiming that the ball maximizes $\lambda(\alpha, \Omega)$ among all Lipschitz sets with prescribed volume. Freitas and Krejcirik disproved the conjecture in 2015, giving a counter-example based on the asymptotic behaviour of the eigenvalues on a disc and an annulus of the same area, when $\alpha \rightarrow -\infty$. In the same time, for sets of area equal to $1$, they proved that the conjecture is true, provided $\alpha$ is close to $0$. In any dimension of the space, only the local maximality of the ball is known (see \cite{FNT}). 

In 2017, Antunes, Freitas and Krejcirik studied (see \cite{AFK}) the problem of maximising the first eigenvalue under a perimeter constraint, proving that the disc is the solution among all $C^2$ domains of $\R^2$. Their proof is based on a comparison argument obtained by the method of parallel coordinates, originally introduced by Payne and Weinberger \cite{PW}. The question remained open in arbitrary dimension.

The purpose of this paper is to prove that the inequality holds true in $\R^n$, provided that we restrict ourselfs to the class of convex sets or, even more, to Lipschitz sets which can be written as $\Omega \setminus K$, where $\Omega$ is open and convex, and $K$ is a closed set. Our proof is inspired by the  parallel coordinates method. The idea is to build a suitable test function on a convex set by dearranging the first eigenfunction of the ball onto the level sets of the distance function to the boundary of the convex set. The dearrangement is perfomed in such a way, that the $L^2$-norms of the gradient and the function itself are non-increasing and the $L^2$-norm of the trace at the boundary remains constant. Then, the conclusion comes out from the Rayleigh quotient formulation of the first eigenvalue.  

Several open problems are discussed in the last section. In particular, the inequality we prove in this paper for convex sets is conjectured to hold for {\it every } set. We prove an existence of an optimal set for the shape optimization problem maximzing $\lambda$ in the class of all measurable sets with perimeter not larger than a given constant. 
The maximality of the ball for the first eigenvalue in the class of convex sets, but under a volume constraint, is another  a challenging problem which, for the moment, remains also open.

\section{Notation and preliminaries}
Throughout the paper we
 denote by $|D|$ and $P(D)$ the $n$-dimensional Lebesgue measure of D and the $(n-1)$-dimensional Hausdorff measure in $\rn$ of its boundary. Moreover,  $B_1$ stands for the open unit ball in $\rn$ and $\omega_n=|B_1|$.

\begin{proposition}
\label{simplicity}
The first eigenvalue $\lambda(\alpha,\Omega)$ of \eqref{P} is non positive and it is simple, that is all the associated eigenfunctions are scalar multiples of each other, and the corresponding eigenfunction maybe taken to be positive.
\end{proposition}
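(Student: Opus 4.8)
The plan is to establish the three assertions in order, using only the variational characterization \eqref{ray} together with standard interior elliptic regularity and the strong maximum principle. Before anything else one should note that the minimum in \eqref{ray} is attained and finite: by the trace inequality $\int_{\partial\Omega} v^2 \, \h \le \eps \int_\Omega |\nabla v|^2 \, dx + C_\eps \int_\Omega v^2\, dx$, valid on Lipschitz domains, the functional $F(\cdot,\Omega)$ is bounded below, and the direct method together with the compactness of the embeddings $H^1(\Omega)\hookrightarrow L^2(\Omega)$ and $H^1(\Omega)\hookrightarrow L^2(\partial\Omega)$ yields a minimizer. The non-positivity is then immediate: testing \eqref{ray} with the constant function $v \equiv 1$, for which $\nabla v \equiv 0$, gives
$$\lambda(\alpha,\Omega) \le F(1,\Omega) = \frac{\alpha\, P(\Omega)}{|\Omega|} < 0,$$
since $\alpha < 0$. (In passing this shows the eigenvalue is in fact strictly negative.)

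The heart of the argument is the observation that if $u$ is a minimizer then so is $|u|$. Indeed, for $u\in H^1(\Omega)$ one has $\nabla |u| = \mathrm{sgn}(u)\,\nabla u$ almost everywhere, whence $\int_\Omega |\nabla |u||^2\,dx = \int_\Omega |\nabla u|^2\,dx$; moreover the trace of $|u|$ equals the modulus of the trace of $u$, so $\int_{\partial\Omega}|u|^2\,\h = \int_{\partial\Omega} u^2\,\h$, and trivially $\int_\Omega |u|^2\,dx = \int_\Omega u^2\,dx$. Therefore $F(|u|,\Omega) = F(u,\Omega) = \lambda(\alpha,\Omega)$, so $w:=|u|\ge 0$ is again a minimizer, hence a weak solution of \eqref{P}. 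To upgrade $w\ge 0$ to $w>0$ inside $\Omega$, I would invoke interior elliptic regularity, so that $w$ is smooth in $\Omega$ and solves $-\Delta w = \lambda w$ classically, and then the strong maximum principle. The convenient point is that, rewriting the equation as $-\Delta w - \lambda w = 0$, the zeroth-order coefficient $-\lambda$ is nonnegative precisely because $\lambda\le 0$, which is exactly the sign condition under which the strong maximum principle applies in its textbook form; it forces a nonnegative, nontrivial solution to be strictly positive on the connected open set $\Omega$ (equivalently, one may use Harnack's inequality, which would work regardless of the sign of $\lambda$).

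Simplicity then follows from a constant-sign argument. The previous step proves more: if $u$ is any eigenfunction then $|u|>0$ in $\Omega$, so $u$ cannot vanish at an interior point and, being continuous on the connected set $\Omega$, must have constant sign. Now suppose $u_1,u_2$ are two eigenfunctions for $\lambda(\alpha,\Omega)$; fix an interior point $x_0$ and set $c = u_1(x_0)/u_2(x_0)$, which is well defined since $u_2(x_0)\ne 0$. Then $w := u_1 - c\,u_2$ is again an eigenfunction with $w(x_0)=0$. By the constant-sign property an eigenfunction vanishing at an interior point must be identically zero; hence $u_1 = c\,u_2$, so the eigenspace is one-dimensional and the eigenfunction can be chosen positive.

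The step I expect to be the main obstacle is the passage from $w\ge 0$ to $w>0$: it requires enough regularity to speak of pointwise values and to apply the strong maximum principle, and it is the sign of $\lambda$ that makes the principle directly available. One must also be careful to apply the principle in the interior only, since the Robin boundary condition does not by itself prevent the trace of $w$ from vanishing; the constant-sign conclusion is genuinely an interior statement, which is all that the proportionality argument above requires.
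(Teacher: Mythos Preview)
Your proof is correct and follows the same skeleton as the paper: test with a constant to get $\lambda<0$, pass from a minimizer $u$ to $|u|$ to obtain a nonnegative eigenfunction, and then deduce constant sign and simplicity by what the paper simply calls ``standard arguments.'' If anything, your write-up is more careful---you spell out the strong maximum principle step (using $\lambda\le 0$) and the linear-combination argument for simplicity, whereas the paper's proof is a terse three lines.
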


\begin{proof}
Observe that, choosing the constant as test function in \eqref{ray}, it results
$$
\lambda(\alpha,\Omega)   <\alpha \frac{P(\Omega)}{|\Omega|} <0. 
$$
Since $\forall v \in H^1(\Omega)$ it results $F(v,\Omega) \ge F(|v|,\Omega)$, with equality if and only if $v$ has constant sign in $\Omega$, then any eigenfunction associated to $\lambda(\alpha,\Omega) $ must have constant sign.
The simplicity then follows by standard arguments.
\end{proof}

Let $K$ be a nonempty, compact, convex set (i.e. a convex body) and let $\rho>0$. Then the Steiner formula     for the perimeter  it reads as
\begin{eqnarray}
\label{per}
P(K+\rho B_1)&=&n\sum_{i=0}^{n-1}\binom{n-1}{i}W_{i+1}(K)\rho^i
\\
&=& P(K)+n(n-1)W_2(K)\rho + ... +nW_n(K)\rho^n. \notag
\end{eqnarray}
It immediately follows that
\begin{equation}\label{derivata}
\lim_{\rho \to 0^+}\frac{P(K+\rho B_1)-P(K)}{\rho}=n(n-1)W_2(K).
\end{equation}
When $K$ is of class $C^2_+$, \eqref{per} and \eqref{derivata} imply
\begin{equation*}
\lim_{\rho \to 0^+}\frac{P(K+\rho B_1)-P(K)}{\rho}=(n-1)\int_{\partial K} H \h.
\end{equation*}
where, denoted by  $\kappa_1,...,\kappa_{n-1}$ are the principal curvatures of of $\partial K$,
$$
H={(n-1)}^{-1} \sum_{1 \le i \le n-1}\kappa_{i}
$$
is the mean curvature of $\partial K$.
Finally the following Aleksandrov-Fenchel inequality holds true

\begin{equation}\label{AF2}
W_2(K) \ge n^{-\frac{n-2}{n-1}}{\omega_n}^{\frac{1}{n-1}}P(K)^{\frac{n-2}{n-1}}.
\end{equation}
In what follows, the notions introduced above applied to an open set have to be 
understood applied to its closure. 

\medskip
\section{Main Result}
From now on  we denote   
 $$\Omega_t = \{  x \in \Omega : d(x)>t\},\qquad t \in [0,r_\Omega],$$
  where $d(x)$ is the distance of a point $x \in \Omega$ from the boundary of $\Omega$ and $r_\Omega$ is the inradius of $\Omega$. The Brunn-Minkowski theorem (see, for example, \cite{BZ}, \cite{S}) ensures that the map $ (P(\Omega_t))^{\frac{1}{n-1}}$ is concave in $[0,r_\Omega]$. This implies that $ (P(\Omega_t))^{\frac{1}{n-1}}$ is an absolutely continuous function in $]0,r_\Omega[$, there also exists its   right derivative  at $0$ and, since
   $(P(\Omega_t))^{\frac{1}{n-1}}$ is strictly monotone decreasing in $[0,r_\Omega[$, such a derivative is negative.
  
\begin{lemma}
Let $\Omega$ be a bounded, convex, open set in $\rn$. Then, for almost every $t \in ]0,r_\Omega[$,
$$-\frac{d}{dt} P(\Omega_t) \ge n(n-1)W_2(\Omega_t),$$
equality holding if $\Omega$ is a ball.
\end{lemma}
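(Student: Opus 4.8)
The plan is to compare the inner parallel set $\Omega_t$ with an outer parallel set and then differentiate the perimeter, using the Steiner formula \eqref{per} together with \eqref{derivata}. First I would establish the one-sided geometric inclusion
$$\Omega_t + h B_1 \subseteq \Omega_{t-h}, \qquad 0 < h \le t.$$
Indeed, if $x \in \Omega_t$ then $d(x) > t \ge h$, so the ball $B(x,d(x))$ is contained in $\Omega$; hence for every $y \in B_1$ the point $x + hy$ lies in $\Omega$, and since $d$ is $1$-Lipschitz, $d(x+hy) \ge d(x) - h > t - h$, that is $x + hy \in \Omega_{t-h}$.

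Both $\Omega_t + h B_1$ and $\Omega_{t-h}$ are convex bodies, and the perimeter is monotone under inclusion of convex bodies (a classical fact, e.g. monotonicity of the first quermassintegral). Therefore $P(\Omega_t + h B_1) \le P(\Omega_{t-h})$. Applying \eqref{derivata} to the convex body $\Omega_t$ gives
$$P(\Omega_t + h B_1) = P(\Omega_t) + n(n-1) W_2(\Omega_t)\, h + o(h) \qquad (h \to 0^+),$$
so that
$$\frac{P(\Omega_{t-h}) - P(\Omega_t)}{h} \ge n(n-1) W_2(\Omega_t) + \frac{o(h)}{h}.$$

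It then remains to let $h \to 0^+$. By the Brunn--Minkowski concavity recalled before the statement, $t \mapsto P(\Omega_t)$ is differentiable at almost every $t \in \, ]0,r_\Omega[$; at every such point the difference quotient on the left converges to $-\frac{d}{dt}P(\Omega_t)$, while the right-hand side tends to $n(n-1) W_2(\Omega_t)$, which yields the claimed inequality for a.e.\ $t$. Finally, when $\Omega = B_R$ is a ball one has $\Omega_t = B_{R-t}$, hence $P(\Omega_t) = n\omega_n (R-t)^{n-1}$ and $W_2(\Omega_t) = \omega_n (R-t)^{n-2}$; a direct computation shows that both sides equal $n(n-1)\omega_n (R-t)^{n-2}$, so equality holds.

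The main obstacle is the passage to the limit. The inclusion goes only one way---equality in $\Omega_t + hB_1 \subseteq \Omega_{t-h}$ fails in general, as the rounded corners of the outer parallel body of, say, a cube show---so the comparison produces only an inequality, and one must invoke the a.e.\ differentiability of $P(\Omega_t)$, guaranteed by the concavity of $(P(\Omega_t))^{1/(n-1)}$, in order to identify the limit of the difference quotient with $-\frac{d}{dt}P(\Omega_t)$.
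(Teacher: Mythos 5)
Your proof is correct and takes essentially the same route as the paper's: the inclusion $\Omega_t + hB_1 \subseteq \Omega_{t-h}$, monotonicity of the perimeter under inclusion of convex bodies, the expansion \eqref{derivata} for the outer parallel body, and the a.e.\ differentiability of $P(\Omega_t)$ coming from the Brunn--Minkowski concavity. The extra details you supply (the $1$-Lipschitz argument for the inclusion and the explicit computation in the equality case for the ball, which the paper instead settles by noting that the two sets coincide) simply make explicit what the paper leaves implicit.
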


\begin{proof}
It is easy to check that 
$$
\Omega_t +\rho B_1 \subset \Omega_{t-\rho},\qquad 0<\rho<t
$$
and, when $\Omega$ is a ball, these two sets coincide. Since the perimeter is monotone  with respect to the inclusion of convex sets, we get, for almost every $t \in ]0,r_\Omega[$,
$$
-\frac{d}{dt} P(\Omega_t) = \lim_{\rho\to 0^+} \frac{P(\Omega_{t-\rho}) - P(\Omega_t)}{\rho}\ge
 \lim_{\rho\to 0^+} \frac{P(\Omega_t+\rho B_1) - P(\Omega_t)}{\rho} =n (n-1)
 W_2(\Omega_t).
$$
\end{proof}

By simply applying the chain rule formula and recalling that $|Dd(x)|=1$ almost everywhere, it is immediate to prove the following

\begin{lemma}\label{f(d)}
Let $u(x) = f(d(x))$, where $f:[0,+\infty[ \to [0,+\infty[$ is a non increasing, $C^1$ function. Setting
$$
E_t = \{ x \in \Omega: u(x)<t\} = \Omega_{f^{-1}(t)};
$$
then 
$$
\frac{d}{dt} P(E_t) \ge (n-1)\frac{W_2(E_t)}{|Du|_{u=t}}.
$$
\end{lemma}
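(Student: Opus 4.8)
The plan is to prove Lemma \ref{f(d)} by a straightforward change-of-variables argument, relating the level sets of $u$ to those of the distance function $d$, and then invoking the previous lemma on the derivative of the perimeter of $\Omega_t$.

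First I would set up the identification between the two families of level sets. Since $f$ is non-increasing and $C^1$, the superlevel sets of $d$ correspond to sublevel sets of $u=f(d)$: precisely, $u(x)<t$ is equivalent to $f(d(x))<t$, which (because $f$ is decreasing) is equivalent to $d(x)>f^{-1}(t)$. This gives the stated identity $E_t=\Omega_{f^{-1}(t)}$. Writing $s=f^{-1}(t)$, so that $t=f(s)$ and $\frac{dt}{ds}=f'(s)\le 0$, I would differentiate $P(E_t)=P(\Omega_s)$ with respect to $t$ using the chain rule:
\begin{equation*}
\frac{d}{dt}P(E_t)=\frac{d}{ds}P(\Omega_s)\cdot\frac{ds}{dt}=\frac{1}{f'(s)}\,\frac{d}{ds}P(\Omega_s).
\end{equation*}

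Next I would bring in the gradient of $u$. Since $u(x)=f(d(x))$ and $|Dd|=1$ almost everywhere, the chain rule gives $|Du(x)|=|f'(d(x))|=-f'(d(x))$ (using $f'\le 0$). On the set $\{u=t\}=\{d=s\}$ this is exactly $|Du|_{u=t}=-f'(s)$. Substituting into the expression above, and using the previous lemma in the form $-\frac{d}{ds}P(\Omega_s)\ge n(n-1)W_2(\Omega_s)$, I would obtain
\begin{equation*}
\frac{d}{dt}P(E_t)=\frac{1}{f'(s)}\,\frac{d}{ds}P(\Omega_s)=\frac{-\frac{d}{ds}P(\Omega_s)}{-f'(s)}\ge\frac{n(n-1)W_2(\Omega_s)}{|Du|_{u=t}}.
\end{equation*}
Since $\Omega_s=E_t$, the numerator on the right is $n(n-1)W_2(E_t)$, and the claimed inequality follows; note the sign bookkeeping works out because both $\frac{d}{ds}P(\Omega_s)$ and $f'(s)$ are negative, so their ratio is positive, consistent with $P(E_t)$ being an increasing function of $t$.

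The only genuine subtlety, which I expect to be the main (if minor) obstacle, is the regularity bookkeeping: the previous lemma only guarantees the inequality for almost every $s$, and one must ensure the change of variables $s=f^{-1}(t)$ transports this ``almost every'' statement correctly to the $t$-variable and that $f^{-1}$ is well behaved. As long as $f'$ does not vanish on a set of positive measure (so that $f^{-1}$ is absolutely continuous and the null sets match up), the argument goes through verbatim; the concavity and strict monotonicity of $(P(\Omega_s))^{1/(n-1)}$ recorded before the lemma guarantee the requisite differentiability of $P(\Omega_s)$ almost everywhere, so the statement holds for almost every $t$ as required.
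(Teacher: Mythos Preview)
Your proof is correct and is exactly the chain-rule argument the paper has in mind; the paper itself simply states that the lemma is ``immediate'' from the chain rule together with $|Dd|=1$ a.e.\ and the previous lemma. In fact you obtain the sharper bound $\frac{d}{dt}P(E_t)\ge n(n-1)W_2(E_t)/|Du|_{u=t}$, which is the correct outcome of combining the chain rule with Lemma~3.1; the missing factor $n$ in the stated inequality is a typo in the paper (the same factor is also missing in the corresponding displayed formulas in the proof of Theorem~\ref{mainthm}, as one checks on the ball).
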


\noindent Now we can state our main result.
\begin{theorem}\label{mainthm}
Let $\Omega$ be a bounded, convex, open set in $\rn$ and let $\Omega^\star$ be a ball  with the same perimeter as $\Omega$. Denoted by  $\lambda(\alpha,\Omega)$ and $\lambda(\alpha,\Omega^\star)$  the first eigenvalues of \eqref{P} on $\Omega$ and $\Omega^\star$, then 
\begin{equation}\label{main}
\lambda(\alpha,\Omega) \le \lambda(\alpha,\Omega^\star).
\end{equation}
 Equality holds only if $\Omega$ is a ball. 
\end{theorem}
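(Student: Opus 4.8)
The plan is to use the first (positive, radial) eigenfunction of the ball $\Omega^\star$ as a template and to transplant its radial profile onto $\Omega$ through the distance function, so that the Rayleigh quotient \eqref{ray} on $\Omega$ can be estimated from above by $\lambda(\alpha,\Omega^\star)$. Write $R$ for the radius of $\Omega^\star$, so that $P(\Omega^\star)=n\omega_n R^{n-1}$, let $u^\star$ be the first eigenfunction of \eqref{P} on $\Omega^\star$, and set $\mu:=\lambda(\alpha,\Omega^\star)<0$. Being radial, $u^\star$ can be written as $u^\star(x)=f(\dist(x,\partial\Omega^\star))$ for some non-increasing $f\in C^1([0,R])$, $f\ge 0$ (since $\alpha<0$ the Robin condition forces $f'(0)=\alpha f(0)$, so $u^\star$ grows towards the boundary). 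As competitor on $\Omega$ I would take $u(x)=f(d(x))$, which lies in $H^1(\Omega)$ because $d$ is Lipschitz with $|Dd|=1$ a.e.; note that the inscribed ball $B_{r_\Omega}\subseteq\Omega$ satisfies $P(B_{r_\Omega})\le P(\Omega)=P(\Omega^\star)$, whence $r_\Omega\le R$, so $f$ is defined on all of $[0,r_\Omega]$.

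The first step is to reduce the three terms of $F(u,\Omega)$ to one-dimensional integrals. Setting $p(t):=P(\Omega_t)$ and $p^\star(t):=P(\Omega^\star_t)=n\omega_n(R-t)^{n-1}$, the coarea formula together with $|Dd|=1$ and $\mathcal H^{n-1}(\{d=t\})=P(\Omega_t)$ gives
\begin{equation*}
\int_\Omega|\nabla u|^2\,dx=\int_0^{r_\Omega}|f'(t)|^2\,p(t)\,dt,\qquad
\int_\Omega u^2\,dx=\int_0^{r_\Omega}f(t)^2\,p(t)\,dt,
\end{equation*}
while the trace of $u$ on $\partial\Omega$ is the constant $f(0)$, so that $\int_{\partial\Omega}u^2\,\h=f(0)^2P(\Omega)=f(0)^2P(\Omega^\star)$. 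Thus the boundary term is \emph{identical} on $\Omega$ and on $\Omega^\star$, and the entire comparison is thrown onto the two volume integrals, weighted by the perimeter profiles $p$ and $p^\star$.

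The geometric heart of the argument is the pointwise bound $p(t)\le p^\star(t)$ on $[0,r_\Omega]$. To obtain it I would combine the inequality $-\frac{d}{dt}P(\Omega_t)\ge n(n-1)W_2(\Omega_t)$ proved above with the Aleksandrov–Fenchel inequality \eqref{AF2}; this yields the autonomous differential inequality $-p'(t)\ge (n-1)(n\omega_n)^{1/(n-1)}\,p(t)^{(n-2)/(n-1)}$ a.e., with equality for the ball. Rewriting it for $\phi:=p^{1/(n-1)}$ gives $-\phi'(t)\ge(n\omega_n)^{1/(n-1)}$, an inequality saturated by $\phi^\star(t)=(n\omega_n)^{1/(n-1)}(R-t)$; since $\phi(0)=\phi^\star(0)$ by the perimeter constraint, integrating yields $\phi(t)\le\phi^\star(t)$, i.e. $p(t)\le p^\star(t)$.

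Finally I would close the estimate using the sign of $\mu$. Because $\mu<0$ and $f\ge 0$, the integrand $|f'(t)|^2-\mu f(t)^2$ is nonnegative, so $p\le p^\star$ on $[0,r_\Omega]$ and $r_\Omega\le R$ give
\begin{equation*}
\int_0^{r_\Omega}\big(|f'|^2-\mu f^2\big)p\,dt
\le\int_0^{R}\big(|f'|^2-\mu f^2\big)p^\star\,dt.
\end{equation*}
Adding $\alpha f(0)^2P(\Omega^\star)$ to both sides, the right-hand side becomes $\int_0^R|f'|^2p^\star\,dt+\alpha f(0)^2P(\Omega^\star)-\mu\int_0^R f^2p^\star\,dt$, which vanishes: this is precisely the one-dimensional Euler–Lagrange identity satisfied by the ball, obtained by integrating $(p^\star f')'=-\mu p^\star f$ against $f$ and using $p^\star(R)=0$ and $f'(0)=\alpha f(0)$. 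Hence the numerator of $F(u,\Omega)$ is at most $\mu$ times its denominator, so $F(u,\Omega)\le\mu$, and the minimality in \eqref{ray} gives $\lambda(\alpha,\Omega)\le F(u,\Omega)\le\mu=\lambda(\alpha,\Omega^\star)$. The main obstacle I anticipate is twofold: making rigorous the reduction to one-dimensional integrals and the endpoint identity at $t=0$ (the trace and Robin boundary condition) and as $t\to r_\Omega$ (where $p$ degenerates); and the equality analysis in \eqref{main}, which should force equality in \eqref{AF2} for every parallel set $\Omega_t$ together with $r_\Omega=R$, thereby pinning down $\Omega$ as a ball.
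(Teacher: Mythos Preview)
Your proof is correct and follows the same strategy as the paper's: the test function $f(d(x))$, the perimeter comparison $P(\Omega_t)\le P(\Omega^\star_t)$ obtained from Lemma~3.1 together with the Aleksandrov--Fenchel inequality~\eqref{AF2}, and the use of $\lambda(\alpha,\Omega^\star)<0$ are all identical (your parametrization by distance is just a change of variables from the paper's parametrization by level value). The only cosmetic difference is the closing step: the paper bounds $\int_\Omega|\nabla w|^2$, $\int_\Omega w^2$, and the trace term separately and then combines them using the negativity of the numerator, whereas you bundle the first two into the single nonnegative integrand $|f'|^2-\mu f^2$ and invoke the one-dimensional Euler--Lagrange identity on the ball---a slightly cleaner but equivalent maneuver.
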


\begin{remark}\label{palla}
Proposition  \ref{simplicity} ensures that  a positive eigenfunction $v$ associated to the first eigenvalue of \eqref{P} in a ball is radially symmetric and radially increasing.
Moreover $v(x)= v(|x|)=\phi(r)$, where $\phi$ solves
\begin{equation}\label{PR}
\left\{
\begin{array}{ll}
-r^{-(n-1)}[r^{(n-1)} \phi'(r)]'= \lambda(\alpha,B_R) \phi(r) &  r \in[0,R]\\\\
\phi'(0)=0\\\\
\phi'(R) +\alpha \phi(R)=0 .
\end{array}
\right.
\end{equation}
The solution to \eqref{PR} is explicit and it is 
\[
\phi(r) =r^{-\beta}I_{\beta}(kr), \quad \beta= \frac{n-2}{2}
\]
where $I_{\beta}$ is a modified Bessel function (see \cite{AS} Section 9.6) and $k= \sqrt{-\lambda(\alpha,B_R)}$ is the smallest nonnegative root of the equation:
\[
kI_{\beta+1}(kR)+ \alpha I_{\beta}(kR)=0.
\]
\end{remark}

\begin{proof}[Proof of Theorem \ref{mainthm}]

Let $v$ be the eigenfunction associated to $\lambda(\alpha,\Omega^\star)$. We denote by $v_m=v(0) = \min_{\Omega^\star} v$ and by $v_M= \max_{\Omega^\star} v$.
 Remark \ref{palla} ensures that the gradient of $v$ has constant modulus on the level lines of $v$.

  Let us consider the function $g(t)=|Dv|_{v=t}$, $v_m<t \le v_M$. Set  $w(x)=G(d(x))$,  $x \in\Omega$, where $G^{-1}(t)=\displaystyle \int_{t}^{v_M} {\frac{1}{g(s)}}ds$. By construction $w \in H^1(\Omega)$ and it results:
 
 \begin{equation}
 \label{testfunction}
 \begin{array}{ll}
 & w_M= \max_{\Omega} w =v_M= G(0); \cr \\
 &  w_m= \min_{\Omega} w =G(r_\Omega) \geq v_m = G(r_{\Omega^\star}) \cr \\
 &  |Dw|_{w=t} = |Dv|_{v=t} = g(t) \quad w_m \le t \le w_M.
 \end{array}
 \end{equation}

\noindent Let $$
E_t = \{ x \in \Omega : w(x) <t\} =  \{ x \in \Omega : d(x) > G^{-1}(t)\}, \>
B_t = \{ x \in \Omega^\star : v(x) <t\}. 
$$
 By Lemma \ref{f(d)} and the isoperimetric inequality \eqref{AF2} we get for $quad w_m < t \le w_M.$
 \begin{eqnarray*}
 \frac{d}{dt} P(E_t) &\ge&(n-1) \frac{W_2(E_t)}{g(t)} 
 \\
 & \geq &
(n-1)n^{-\frac{n-2}{n-1}} \omega_n^{ \frac{1}{n-1}} \frac{(P(E_t))^{\frac{n-2}{n-1}}}{g(t)} \notag
 \end{eqnarray*}
 while for $v$ it holds
  \begin{equation*}
 \frac{d}{dt} P(B_t) =(n-1)
n^{-\frac{n-2}{n-1}} \omega_n^{ \frac{1}{n-1}} \frac{(P(B_t))^{\frac{n-2}{n-1}}}{g(t)}
 \end{equation*}
 with $P(\Omega)=P(E_{w_M})=P(B_{v_M})=P(\Omega^\star)$. Then, by classical comparison theorems,
\begin{equation}\label{perimetri}
P(E_t) \le P(B_t), \qquad  w_m < t \le w_M.
\end{equation}

On the other hand, by \eqref{perimetri},
$$
\int_{w=t}|Dw|\,\h =g(t) P(E_t)\le g(t) P(B_t)= \int_{v=t}|Dv| \,\h, \qquad w_m < t \le w_M
$$

then, by co-area formula and \eqref{testfunction},
\begin{eqnarray}\label{energie}
\int_\Omega |Dw|^2\,dx&=&\int_{w_m}^{w_M} g(t)P(E_t)\,dt
\\
& \le& \int_{w_m}^{v_M} g(t)P(B_t)\,dt \leq \int_{v_m}^{v_M} g(t)P(B_t)\,dt 
\\
&=& \int_{\Omega^\star}|Dv|^2\,dx. \notag
\end{eqnarray}

Since by construction $w(x) = w_M$ if $ x \in \partial \Omega$, we have
\begin{equation}
\label{boundary}
\int_{\partial \Omega} w^2 \, \h = w_M^2 P(\Omega) = v_M^2 P(\Omega^\star) = \int_{\partial \Omega^\star} v^2 \, \h
\end{equation}
 
 \noindent  Moreover, denoted by $\mu(t)=|\{x \in \Omega:\>  w(x) < t\}| = |E_t|$ and $\nu(t)=|B_t| = |\{x \in \Omega^\star:\> v(x) <t\}|$, by co-area formula we obtain
 \begin{equation*}
 \mu'(t)=\int_{w=t}\frac{1}{|Dw|}\,\h=\frac{P(E_t)}{g(t)} \le \frac{P(B_t)}{g(t)}=\int_{v=t}\frac{1}{|Dv|}\,\h = \nu'(t), w_m < t \le w_M.
 \end{equation*}
 The above inequality holds true (trivially) even if $ 0<t<w_m$. Then, integrating between $s$ and $v_M$ 
 
 \begin{equation}
 |\Omega| - \mu(s) \leq |\Omega^\star| - \nu(s) \quad s \in [0,v_M].
 \end{equation}
 
This implies
\begin{eqnarray}\label{normeL2}
\int_\Omega w^2 \,dx&=& \int_0^{w_M} 2t(|\Omega|-\mu(t))\,dt \leq
\\
&=&  \int_0^{v_M} 2t(|\Omega|-\nu(t))\,dt  = \int_{\Omega^\star} v^2 \,dx\notag
\end{eqnarray}

 By \eqref{ray}, \eqref{energie}, \eqref{normeL2} and \eqref{boundary} we finally have
\begin{eqnarray*}
\lambda(\alpha,\Omega)&\le& \dfrac {\displaystyle \int_\Omega |\nabla w|^2 \, dx + \alpha \displaystyle\int_{\partial \Omega} w^2 \,  \h }{\displaystyle\int_\Omega  w^2 \, dx}\le \notag
\\
& \le & \dfrac {\displaystyle \int_{\Omega^\star} |\nabla v|^2 \, dx + \alpha \displaystyle\int_{\partial \Omega^\star} v^2 \,  \h }{\displaystyle\int_{\Omega^\star}  v^2 \, dx} =
\lambda(\alpha,\Omega^\star)\> <0 \notag
\end{eqnarray*}
 getting the claim.
 
  If equality occurs, then $\Omega$ is a ball as a consequence of the equality case in the isoperimetric inequality. 
 \end{proof}

\section{Further remarks and open questions}
\noindent {\bf More general sets.} Let $\Omega \subset \R^n$ be a bounded, open, convex set and $K \subset \Omega$ be a closed set, smooth enough such that the eigenvalue problem is well defined in $\Omega \setminus K$. For instance, $K$ may be the closure of a Lipschitz set. It can be easily observed that 
$$\lambda(\alpha, \Omega \setminus K) \le \lambda (\alpha ,\Omega).$$ 
Taking  $v$  a non-zero eigenfunction in $\Omega$, one has
$$\lambda(\alpha, \Omega \setminus K) \le\dfrac {\displaystyle \int_{\Omega\setminus K} |\nabla v|^2 \, dx + \alpha \displaystyle\int_{\partial (\Omega\setminus K)} v^2 \,  \h }{\displaystyle\int_{\Omega \setminus K} v^2 \, dx}\le \dfrac {\displaystyle \int_\Omega |\nabla v|^2 \, dx + \alpha \displaystyle\int_{\partial \Omega} v^2 \,  \h }{\displaystyle\int_\Omega  v^2 \, dx}= \lambda (\alpha ,\Omega).
$$
Using Theorem \ref{mainthm} applied to $\Omega$, the fact that the perimeter of $\Omega \setminus K$ is larger than the perimeter of $\Omega$ together to the monotonicity on balls (i.e.    $\lambda (\alpha, B_{r_1}) <\lambda (\alpha, B_{r_1}$), if $r_1 <r_2$,  see for instance \cite[Theorem 5]{AFK}) one gets that the statement of Theorem \ref{mainthm} applies to $\Omega \setminus K$, i.e.
 \begin{equation}\label{main.b}
\lambda(\alpha,\Omega\setminus K) \le \lambda (\alpha ,\Omega)\le \lambda (\alpha ,\Omega^\star)\le  \lambda(\alpha,(\Omega\setminus K)^\star)
\end{equation}

It remains unclear how large is the class of open sets for which Theorem \ref{mainthm} remains true (see Open problem 1, below). For instance, we do not know whether Theorem \ref{mainthm} is valid even in the class of contractible domains. From a shape optimization point of view, there is some similarity in behaviour  between the Robin eigenvalue with negative boundary parameter and the Steklov eigenvalue. In dimension larger than  $2$, performing a small hole in the center of the ball and rescaling the geometry to keep the surface area constant, rises the Steklov eigenvalue. Fraser and Schoen proved that there exists even a contractible domain with higher Steklov eigenvalue  than the ball with the same surface area in any dimension $n\ge 3$ (see \cite{FS}) by a suitable deformation of the punched ball.  

We report and discuss below some open problems listed in  \cite{AFK}, among which we have partially solved the first one.

\noindent {\bf Open problem 1.}
Prove that for $n \ge 3$ and for every $c >0$ the solution of 
 \begin{equation}\label{main.s}
 \sup\{ \lambda(\alpha, \Omega) : \Omega \subset \R^n, \; P(\Omega)= c\},
 \end{equation}
is the ball. 
\EEE

This question is related to the maximal value of the mean curvature, see \cite{PP}. For a smooth set $\Omega$
\[
\lim_{\alpha\to -\infty} \lambda(\alpha, \Omega^\star)-\lambda(\alpha, \Omega) > 0
\]
 if and only if the maximal value of the mean curvature of the boundary of $\Omega$ is larger than that of $\Omega^*$. This is always true. Indeed, for any given set $\Omega$ we can consider the convex hull $C_{\Omega}$ and observe that 
 
\begin{eqnarray*}
n\omega_n &=& \int_{\partial C_{\Omega}} \kappa\,  \h=\int_{\partial C_\Omega\cap \partial\Omega} \kappa\,  \h\\
&\le& \int_{\partial C_\Omega\cap \partial\Omega} \left(\frac{H}{n-1}\right)^{n-1} \, \h\le\frac{P(\Omega)}{(n-1)^{n-1}} \left(\max_{\partial\Omega} H\right)^{n-1} ,
 \end{eqnarray*}
with equality on balls.
Here $\kappa$ is the Gauss curvature of $\partial C_{\Omega}$, (that is the product of the $n-1$ principal curvatures of the boundary of $C_{\Omega}$). We have used the fact that the gaussian curvature of $\partial C_{\Omega}$ vanishes when $\partial C_{\Omega}\setminus\partial\Omega$. Moreover the inequality $\kappa\le \left(\frac{H}{n-1}\right)^{n-1}$ is follows from arithmetic-geometric mean inequality.

The existence of an optimal shape (independent on the knowledge of its geometry) for the shape optimization problem
above
was not known for $n \ge 3$. We shall prove below an existence result in the larger class of measurable sets of finite perimeter. Of course, it would be interesting to prove the regularity of the optimal set, but this question is, in general, very delicate. 

In order to prove an existence result, we have to extend the definition of $\lambda$ to measurable sets with finite perimeter. Let $\Omega \subset \R^n$ be a measurable set with finite measure. The (generalized) perimeter of $\Omega$ is defined by
$$P(\Omega):=\sup\{ \int_\Omega \mbox{div} \varphi \, dx : \varphi \in C^\infty_c (\R^n, \R^n), \|\varphi\|_\infty \le 1\}.$$
If $\Omega$ is Lipschitz, then the perimeter defined above coincides with the $(n-1)$-Hausdorff measure of the topological boundary. If $\Omega$ is just measurable and $P(\Omega) <+\infty$, then the perimeter defined above coincides with the $(n-1)$-Hausdorff measure of the set of points with density $\frac 12$ in $\Omega$. This set is denoted $\partial ^* \Omega$ and is called the reduced boundary (and is rectifiable). The natural extension of the definition of $\lambda$ to measurable sets with finite perimeter is 
\begin{equation}
\label{ray.m}
\lambda(\alpha,\Omega)  = \mathop{\inf_{v\in H^1(\R^n)}}_{ v|_\Omega \ne 0} \dfrac {\displaystyle \int_\Omega |\nabla v|^2 \, dx + \alpha \displaystyle\int_{\partial ^*\Omega} v^2 \,  \h }{\displaystyle\int_\Omega  v^2 \, dx}.
\end{equation}

\begin{proposition}\label{th001}
Let $c>0$ be fixed. The following shape optimization problem has a solution
 \begin{equation}\label{main.s}
 \max\{ \lambda(\alpha, \Omega) : \Omega \subset \R^n, \mbox{ measurable}, \; P( \Omega)\le  c\}.
 \end{equation}

\end{proposition}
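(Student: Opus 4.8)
The plan is to apply the direct method of the calculus of variations: take a maximizing sequence, establish compactness up to translations, and conclude by showing that the map $\Omega\mapsto\lambda(\alpha,\Omega)$ is upper semicontinuous along it. Put $M=\sup\{\lambda(\alpha,\Omega):P(\Omega)\le c\}$, which is the quantity to be attained.

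\emph{A priori bounds.} Testing \eqref{ray.m} with a function equal to $1$ on $\Omega$ and compactly supported gives, exactly as in Proposition \ref{simplicity}, $\lambda(\alpha,\Omega)\le \alpha P(\Omega)/|\Omega|$. Since $\alpha<0$, combining this with the isoperimetric inequality $P(\Omega)\ge n\omega_n^{1/n}|\Omega|^{(n-1)/n}$ yields $\lambda(\alpha,\Omega)\le -|\alpha|\,n\,\omega_n^{1/n}|\Omega|^{-1/n}$, while the same inequality bounds $|\Omega|\le (c/(n\omega_n^{1/n}))^{n/(n-1)}=:V_{\max}$. Hence $-\infty<M<0$, the lower bound coming from testing with a ball of perimeter $c$. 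Along any maximizing sequence $\Omega_k$, with $\lambda_k:=\lambda(\alpha,\Omega_k)\to M$, the estimate $\lambda_k\le -|\alpha|n\omega_n^{1/n}|\Omega_k|^{-1/n}$ forbids $|\Omega_k|\to0$; thus $0<\delta\le |\Omega_k|\le V_{\max}$ and $P(\Omega_k)\le c$.

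\emph{Compactness.} The characteristic functions $\chi_{\Omega_k}$ are bounded in $BV(\R^n)$, which only gives $L^1_{\mathrm{loc}}$ compactness; upgrading this to convergence in $L^1(\R^n)$ is the point I expect to be the \textbf{main obstacle}, since the problem is translation invariant and mass may escape to infinity or split. I would run the concentration--compactness alternative of Lions on the densities $\chi_{\Omega_k}$. Vanishing is impossible: if the measure spread into arbitrarily many well-separated clusters, the isoperimetric inequality applied cluster by cluster would force $P(\Omega_k)\to+\infty$, against $P(\Omega_k)\le c$. Dichotomy can be discarded because, for the infimum formulation \eqref{ray.m}, the first eigenvalue of a disjoint union equals the minimum of the eigenvalues of its pieces, so one of the two far-apart pieces is itself admissible (its perimeter being at most $c$) with eigenvalue $\ge\lambda_k$; replacing $\Omega_k$ by that piece keeps the sequence maximizing and removes the splitting. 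In the remaining compact case, after translations and extraction, $\chi_{\Omega_k}\to\chi_\Omega$ in $L^1(\R^n)$, and lower semicontinuity of the perimeter gives $P(\Omega)\le\liminf P(\Omega_k)\le c$, so $\Omega$ is admissible with $|\Omega|\ge\delta$.

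\emph{Upper semicontinuity and conclusion.} It remains to prove $\lambda(\alpha,\Omega)\ge M$, the reverse inequality being automatic from admissibility. The key observation is that, for each fixed \emph{smooth} $v$ (which suffices by density), the quotient $\Omega\mapsto \big(\int_\Omega|\nabla v|^2+\alpha\int_{\partial^*\Omega}v^2\,\h\big)/\int_\Omega v^2$ is upper semicontinuous for the $L^1(\R^n)$ convergence of characteristic functions: the volume integrals $\int_{\Omega_k}|\nabla v|^2$ and $\int_{\Omega_k}v^2$ converge by dominated convergence, while the boundary term $\alpha\int_{\partial^*\Omega}v^2\,\h=-|\alpha|\int_{\partial^*\Omega}v^2\,\h$ is upper semicontinuous precisely because the weighted perimeter $\Omega\mapsto\int_{\partial^*\Omega}v^2\,\h$, with the continuous nonnegative weight $v^2$, is lower semicontinuous under $L^1$ convergence of sets (classical, e.g. by Reshetnyak) --- here the negative sign of $\alpha$ is exactly what makes the combination work. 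Since a pointwise infimum of upper semicontinuous functions is upper semicontinuous, testing \eqref{ray.m} on $\Omega_k$ with a near-optimal $v$ for $\Omega$ gives $\limsup_k\lambda_k\le R(v,\Omega)$, and taking the infimum over $v$ yields $M=\limsup_k\lambda_k\le\lambda(\alpha,\Omega)$. Therefore $\lambda(\alpha,\Omega)=M$ and $\Omega$ solves \eqref{main.s}.
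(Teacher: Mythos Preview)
Your overall scheme --- direct method with a priori volume bounds, compactness modulo translations, and upper semicontinuity of $\Omega\mapsto\lambda(\alpha,\Omega)$ along $L^1$-convergence of characteristic functions --- is exactly the one the paper follows, and your a priori bounds are identical to theirs. The two proofs differ only in how the two nontrivial steps are implemented.

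For compactness, the paper does \emph{not} run Lions' alternative. Instead it quotes an isodiametric control lemma from \cite{BC}: any set with $|\Omega|\le m$ and $\lambda(\alpha,\Omega)>-A$ decomposes, up to a null set, into at most $N(m,A,\alpha,n)$ pieces at pairwise positive distance, each of diameter $\le D(m,A,\alpha,n)$. One of these pieces carries the same eigenvalue and has no larger perimeter, so after translation the maximizing sequence sits in a fixed ball and one concludes by $BV$ compactness. Your concentration--compactness route is a legitimate alternative, and vanishing is indeed excluded by the perimeter bound, but your dichotomy step is too quick: the identity ``$\lambda$ of a disjoint union equals the minimum over the pieces'' holds only for pieces at \emph{positive} distance, whereas in dichotomy $\Omega_k$ is merely close to such a configuration. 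You must first cut $\Omega_k$ along a well-chosen sphere (coarea gives a radius where the added reduced boundary is small) and then argue that this surgery perturbs the eigenvalue by $o(1)$ --- this is where one actually uses $\alpha<0$ and some uniform control on near-minimizers. It can be done, but it is precisely the work the cited lemma packages. What the paper's route buys is a clean reduction to bounded diameter with no error terms; what your route buys is self-containment at the price of that extra surgery estimate.

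For upper semicontinuity, the paper simply cites \cite[Proposition~16]{BC}. Your direct argument --- volume integrals pass to the limit by dominated convergence, and $\alpha\int_{\partial^*\Omega}v^2\,\h$ is u.s.c.\ because the weighted perimeter with continuous nonnegative weight $v^2$ is l.s.c.\ (Reshetnyak) and $\alpha<0$ --- is correct and is in fact how that cited result is proved.
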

Under a volume constraint, this question has been discussed in \cite{BC}. The perimeter constraint does not rise specific difficulties. An intriguing point is that we do not know whether the constraint is saturated at the optimal set $ \Omega_{opt} $, namely $P( \Omega_{opt} )=c$. This fact is quite surprising, and is essentially due to the fact that $\lambda(\alpha, \Omega)$ does not behave in a controllable way to rescaling, namely we do not know if for every $\Omega$ and for every $t \ge 1$ we have
$$\lambda(\alpha, t\Omega) \ge \lambda(\alpha, \Omega).$$
\begin{proof}[Proof of Proposition \ref{th001}] Let $(\Omega_k)_k$ be a maximizing sequence. We have
$$\lambda(\alpha, \Omega_k) \le \alpha \frac{P(\Omega_k)}{|\Omega_k|} \le \alpha C_n \frac{1}{|\Omega_k|^\frac{1}{n} },$$
where $C_n$ is a dimensional constant popping up in the isoperimetric inequality in $\R^n$. Let $A>0$ be equal to $-\lambda(\alpha, B)$, where $B$ is the ball of perimeter equal to $c$. Then, we can assume that the maximizing sequence satisfies $\lambda(\alpha, \Omega_k) > -A$. From this inequality on one hand, and the isoperimetric inequality on the other, we notice that there exists two constant $0<m_1<m_2$ such that $m_1\le |\Omega_k|\le m_2$. 

We recall the following isodiametric control property from \cite[Proposition 14]{BC}, in a simplified form.
\begin{lemma}
 Let $m, A >0$. There exists a constant $D=D(m,\alpha,n,A)$ such that 
if $\Omega\subset\mathbb{R}^d$ is a set of finite perimeter such that $|\Omega|\le m$  and ${\lambda}(\alpha, \Omega) > -A$, then $\Omega$ can be decomposed (up to a set of zero measure) as a union of at most $N$  bounded sets of finite perimeter, pairwise  at  positive distance
$$\Omega=\Omega_1\cup\ldots\cup\Omega_N,$$
where $N<C\frac{m A^n}{|\alpha|^n}+1$ and  $\text{diam}(\Omega_j)\le D$, $C$ being a dimensional constant.
\end{lemma}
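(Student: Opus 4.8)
\emph{Strategy.} The whole argument rests on feeding cheap test functions into the Rayleigh quotient \eqref{ray.m}. Since $\alpha<0$, the constant function gives, for every set $F$ of finite perimeter, $\lambda(\alpha,F)\le\alpha P(F)/|F|$; combined with the isoperimetric inequality $P(F)\ge n\omega_n^{1/n}|F|^{(n-1)/n}$ this yields
\[
\lambda(\alpha,F)\le-|\alpha|\,n\,\omega_n^{1/n}\,|F|^{-1/n}.
\]
Hence any $F$ with $\lambda(\alpha,F)>-A$ obeys the volume lower bound $|F|>n^n\omega_n\,|\alpha|^n/A^n=:v_0$. I shall also repeatedly use the decoupling principle: if a set splits, up to a null set, into finitely many pieces at mutual positive distance, then $H^1(\rn)$ decouples across the pieces and $\lambda(\alpha,\cdot)$ of the union equals the minimum of the $\lambda(\alpha,\cdot)$ of the pieces.

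\emph{Construction of the decomposition and the bound on $N$.} I would build the pieces by repeatedly cutting along empty slabs. Say that a set $F$ has a \emph{gap} if for some direction $e$ there is a slab $\{a<x\cdot e<b\}$, $a<b$, meeting $F$ in a null set while $F$ has positive measure on either side; cutting there splits $F$ (up to a null set) into two sets whose supports lie at distance $\ge b-a>0$, without changing the total measure. Starting from $\Omega$, I keep cutting any current piece that has a gap. At every stage the current pieces are pairwise at positive distance, so by the decoupling principle each of them has $\lambda(\alpha,\cdot)\ge\lambda(\alpha,\Omega)>-A$, whence measure $>v_0$; since the total measure is conserved and equal to $|\Omega|\le m$, there can never be more than $m/v_0$ pieces. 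Consequently the procedure stops after finitely many cuts and produces pieces $\Omega_1,\dots,\Omega_N$, pairwise at positive distance, with
\[
N<\frac{m}{v_0}=\frac{1}{n^n\omega_n}\,\frac{mA^n}{|\alpha|^n}.
\]
By construction no $\Omega_j$ has a gap, that is, the essential projection of each $\Omega_j$ onto every line is an interval, with a.e.\ positive slice measure.

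\emph{The diameter bound (main obstacle).} Fix a piece $E:=\Omega_j$ and a unit vector $e$, and let $\sigma(t)=\mathcal H^{n-1}(E\cap\{x\cdot e=t\})$ be its slice profile on the projection interval $(0,L)$; by the previous step $\sigma>0$ a.e., while the coarea formula gives $\int_0^L\sigma\,dt=|E|\le m$. Testing \eqref{ray.m} with $v(x)=\psi(x\cdot e)$, $\psi\ge0$, estimating the boundary term from below by Vol'pert's slicing theorem and the coarea formula on $\partial^*E$, namely $\int_{\partial^*E}\psi^2\,d\mathcal H^{n-1}\ge\int_0^L\psi(t)^2\,p(t)\,dt$ with $p(t)=\mathcal H^{n-2}(\partial^*(E\cap\{x\cdot e=t\}))$, and then using the $(n-1)$-dimensional isoperimetric inequality $p\ge(n-1)\omega_{n-1}^{1/(n-1)}\sigma^{(n-2)/(n-1)}$, I reduce the estimate (recall $\alpha<0$) to the one-dimensional upper bound
\[
\lambda(\alpha,E)\le\frac{\displaystyle\int_0^L\psi'(t)^2\sigma\,dt+\alpha(n-1)\omega_{n-1}^{1/(n-1)}\int_0^L\psi^2\sigma^{(n-2)/(n-1)}\,dt}{\displaystyle\int_0^L\psi^2\sigma\,dt}.
\]
The point is that $\int_0^L\sigma\le m$ forces $\sigma\le 2m/L$ on a set of $t$-measure at least $L/2$; choosing $\psi$ equal to $1$ on this long thin region and cutting off where $\sigma$ is large, the negative boundary term dominates both the gradient cost and the normalization once $L$ exceeds a threshold of order $m\,(A/|\alpha|)^{n-1}$, so the right-hand side falls below $-A$. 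This contradicts $\lambda(\alpha,E)>-A$, and hence bounds every projection length, giving $\diam(\Omega_j)\le D=D(m,\alpha,n,A)$ for each $j$. This last step is the heart of the matter: neither the volume bound nor the perimeter bound $P(E)<(A/|\alpha|)|E|$ controls the diameter, since an arbitrarily long, thin tentacle has bounded volume and perimeter; only the eigenvalue constraint, exploited through a test function \emph{localised on the thin part}, excludes it, and the delicate point is to balance the cut-off cost against the boundary gain for an irregular profile $\sigma$.
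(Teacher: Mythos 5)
Note first that the paper does not prove this lemma at all: it is imported verbatim, as stated just before the statement, from \cite[Proposition 14]{BC} (``in a simplified form''), so there is no in-paper proof to compare against, and your attempt must be judged on its own. Your first two steps are sound and surely part of any proof: the constant test function plus the isoperimetric inequality give the volume lower bound $|F|>n^n\omega_n|\alpha|^n/A^n$ for every piece with $\lambda(\alpha,F)>-A$; the decoupling of $\lambda$ across pieces at positive mutual distance is correct; and the slab-cutting procedure terminates because the piece count is bounded by $m/v_0$, which yields exactly $N<C\,mA^n/|\alpha|^n+1$ with $C$ dimensional.

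The genuine gap is the diameter step, and it is not a routine detail left unfilled --- the mechanism you propose fails as stated for $n\ge 3$. First, a small point: gaplessness does not give $\sigma>0$ a.e.\ on the projection interval (the zero set of $\sigma$ can be a fat Cantor set, which contains no slab). More seriously, the superlevel set $\{\sigma>2m/L\}$ is only measurable and may be \emph{dense} in $(0,L)$, in which case no nonzero continuous profile $\psi(t)$ can ``cut off where $\sigma$ is large''; and if instead you let $\psi$ live on the thick set too, the normalization $\int\psi^2\sigma$ picks up the thick mass while your pointwise gain $\sigma^{(n-2)/(n-1)}\ge\sigma\,(L/2m)^{1/(n-1)}$ only controls the boundary term \emph{relative to the mass carried by the thin set}, which may be negligible. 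Quantitatively, the obstruction is that for $n\ge3$ one has $\sigma^{(n-2)/(n-1)}\to0$ as $\sigma\to0$: testing your reduced quotient on any family of windows of fixed length and summing yields only a bound of the form
\begin{equation*}
\int_0^L\sigma^{\frac{n-2}{n-1}}\,dt\;\le\;C(m,A,\alpha,n),
\end{equation*}
which does \emph{not} bound $L$. Concretely, a gapless ``dust tail'' whose mass per unit length $\varepsilon_j$ decays doubly exponentially (say $\varepsilon_j=e^{-2^j}$) satisfies $|\alpha|\,\varepsilon_j^{(n-1)/n}\le C(\varepsilon_{j-1}+\varepsilon_{j+1})+A\varepsilon_j$ for all large $j$, so \emph{every} one-dimensional window test function of your type has quotient $\ge -A$ along the tail, even though the set has arbitrarily large diameter; ruling such sets out requires test functions adapted to the local mass distribution (with cut cost compared to the local, not global, mass), which is precisely the delicate selection argument carried out in \cite[Proposition 14]{BC} and absent from your sketch. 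You correctly flag this balancing as ``the heart of the matter,'' but flagging it is not proving it: as written, the pigeonhole on $\{\sigma\le 2m/L\}$ together with the slice-wise isoperimetric inequality cannot close the argument. (Two further minor repairs: $v(x)=\psi(x\cdot e)$ is not in $H^1(\rn)$ and needs a cutoff in the orthogonal directions, harmless since $|\Omega|,P(\Omega)<\infty$; and the boundedness of the pieces asserted in the statement is itself part of what the diameter bound must deliver, so it cannot be used beforehand.)
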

A direct consequence of this lemma, is that we can replace the set $\Omega_k$ by one of the sets of its decomposition, say $\tilde \Omega_k$ which satisfies
$$\lambda(\alpha, \Omega_k) =\lambda(\alpha, \tilde \Omega_k), \;\;\; P(\tilde \Omega_k) \le P(\Omega_k), \;\;\; \text{diam}(\tilde \Omega_k)\le D.$$
Up to translations, we can assume tha all sets $\tilde \Omega_k$ lie in a ball $B$ of radius $D$. As a consequence of the weak-$\star$ compactness theorem in $BV(B)$ applied to the sequence $(1_{\tilde \Omega_k})_k$, we can extract a subsequence (still denoted using the same index) such that for some measurable subset denoted $ \Omega_{opt} \subset B$ we have
$$1_{\tilde \Omega_k} \stackrel{L^1(B)}{\longrightarrow} 1_{ \Omega_{opt} }, \;\;\; P( \Omega_{opt} ) \le \liminf_{k \rightarrow +\infty} P(\tilde  \Omega_k).$$
 Following the upper semicontinuity result in \cite[Proposition 16, relation (22)]{BC}, we get 
 $$  \limsup_{k \rightarrow +\infty} \lambda(\alpha,\tilde  \Omega_k)\le \lambda(\alpha,   \Omega_{opt}).$$
 This last inequality proves that $ \Omega_{opt} $ is a solution for \eqref{main.s}. 
\end{proof}

\medskip

\noindent {\bf Open problem 2.}
Prove that for $n \ge 3$ the solution of 
 \begin{equation}\label{main.s1b}
\sup\{ \lambda(\alpha, \Omega) : \Omega \subset \R^n, \mbox {$\Omega$ convex},\; |\Omega|= 1\},
 \end{equation}
is the ball.

This question can also be related to the maximal value of the mean curvature, see \cite{PP}.  In \eqref{main.s1b} it is reasonable to require a convexity constraint. Following \cite{FNT16} there exist in $\R^n$ for $n \ge 3$ smooth domains, diffeomorphic to the ball, of volume equal to $1$ and with maximal mean curvature smaller than the mean curvature of the ball of the same volume. This implies that in this latter class of sets, the ball can not be optimal for any $\alpha <0$. 

\noindent {\bf Open problem 3.}
Prove that for $n =2 $ the solution of 
 \begin{equation}\label{main.s2}
\sup\{ \lambda(\alpha, \Omega) : \Omega \subset \R^2, \mbox {$\Omega$ simply connected},\; |\Omega|= 1\},
 \end{equation}
is the ball.

 The argument based on the mean curvature is not anymore valid to contradict \eqref{main.s2}.
For the shape optimization problem  \eqref{main.s2}  the existence of a solution was proved in \cite{BC}.

\EEE

\bigskip
  \noindent {\bf Acknowledgments.} D.B. was supported by the  "Geometry and Spectral Optimization"  research programme 
LabEx PERSYVAL-Lab GeoSpec (ANR-11-LABX-0025-01) and  Institut Universitaire de France.

\medskip

\end{document}